\documentclass[12pt]{amsart}

\usepackage{amssymb, amsmath, amsthm,hyperref}

\newcommand{\Z}{\mathbb{Z}}

\newtheorem{theorem}{Theorem}
\newtheorem{proposition}{Proposition}
\newtheorem{lemma}{Lemma}

\begin{document}

\title{Bunching for relatively pinched metrics}

\author{Yannick Guedes Bonthonneau}

\date{August 2024}

\begin{abstract}
We consider compact Riemannian manifolds whose curvature tensor is pointwise negatively pinched, and improve on the corresponding unstable bunching estimate from Hasselblatt's 1994 paper. 
\end{abstract}

\maketitle

Let $(M,g)$ be a compact (strictly) negatively curved Riemannian manifold. Denote by $K$ its sectional curvature function. Traditionally, $g$ is said to be $a^2$ negatively pinched for some $a\in(0,1)$ if globally, 
\[
-1 \leq K \leq - a^2. 
\]
It is a well established result that if $g$ is $a^2$ negatively pinched, the horospherical foliation of $g$ has regularity $C^{2a-}$. This was proven by Boris Hasselblatt in full generality in \cite{Hasselblatt-94-1}, and then improved on in \cite{Hasselblatt-97}. We recommend the introduction of \cite{Hasselblatt-94-1} for a review of the many results on regularity of dynamical foliations.

Following \cite{Hasselblatt-94-2}, we will say that $g$ is $a^2$ relatively negatively pinched if for every $x\in M$, and every pair of planes $P,Q$ through $x$, 
\[
\frac{K(x,P)}{K(x,Q)} \geq a^2. 
\]
In \cite{Hasselblatt-94-2}, Hasselblatt proved that in this case, the horospherical foliation is $C^{2a^2-}$. We claim that actually
\begin{theorem}\label{thm:main}
Under relative negative $a^2$ pinching, for $a\in (0,1)$, the horospherical foliation is $C^{2a-}$. 
\end{theorem}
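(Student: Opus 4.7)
The plan is to revisit Hasselblatt's proof of the $C^{2a^2-}$ estimate in \cite{Hasselblatt-94-2} and sharpen its central quantitative step through a more careful pointwise analysis of the Riccati equation governing unstable Jacobi fields along geodesics.

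First I would recall the standard reduction: the regularity of the horospherical foliation for the geodesic flow on $M$ is controlled by a bunching condition on the ratio of maximum-to-minimum exponential growth rates along the unstable bundle. For the geodesic flow these growth rates are the eigenvalues of the positive Riccati solution $U^+$ satisfying
\[
U' + U^2 + R = 0
\]
along each geodesic, with $R$ the curvature operator on the orthogonal complement of the velocity. The Hölder exponent of the horospherical foliation is then determined, as in \cite{Hasselblatt-94-1,Hasselblatt-94-2}, by integrated bounds on $U^+$ and in particular on the ratio between its extremal eigenvalues along orbits.

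The key improvement rests on a pointwise observation. Under relative $a^2$ pinching, at each $x\in M$ the symmetric operator $-R(x)$ has its eigenvalues in an interval $[a^2\kappa(x)^2,\kappa(x)^2]$ for some local scale $\kappa(x)>0$. Sharper than what the $2a^2$ estimate uses, I would argue that the Riccati solution $U^+(x)$ inherits the ratio $1/a$ between its extremal eigenvalues rather than $1/a^2$: heuristically, $U^+$ is asymptotic to $\sqrt{-R}$ and the latter already has pointwise condition number bounded by $1/a$. Feeding this refined $1/a$ ratio into the bunching criterion (either in Hasselblatt's integrated form or via Hirsch–Pugh–Shub) then yields exponent $2a$, with the extra factor $2$ arising from the stable/unstable symmetry as in the globally pinched case.

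The main obstacle lies in rigorously establishing the pointwise ratio $1/a$ for $U^+$, since $U^+(x)$ is defined by an asymptotic condition and therefore depends on the entire forward geodesic rather than on $R(x)$ alone. My preferred route would be a time reparametrization $d\tau=\kappa(x(t))\,dt$ together with the change of unknown $V=U^+/\kappa$, which transforms the Riccati equation into one whose renormalized curvature operator $R/\kappa^2$ is uniformly pinched in $[-1,-a^2]$, at the price of a logarithmic-derivative error of the form $-V\,d\log\kappa/d\tau$. Showing that this error term does not degrade the extremal eigenvalue ratio of the stable solution $V^+$ — so that integrated bounds in $\tau$ transfer back to the original time — is the real technical heart of the argument, and should be precisely where the difference between relative and global pinching is controlled.
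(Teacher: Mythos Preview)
Your central claim --- that the positive Riccati solution $U^+$ inherits a pointwise eigenvalue ratio bounded by $1/a$ --- is precisely the statement that is known to be \emph{false}. The paper itself points this out: Gerber, Hasselblatt and Keesing \cite{Gerber-Hasselblatt-Keesing-2003} exhibit periodic forcings $k_1\geq a^2 k_2$ for which the ratio of the two scalar Riccati solutions drops to roughly $2a^2$ at certain times, not merely to $a$. Your heuristic $U^+\sim\sqrt{-R}$ is only valid in an adiabatic regime; when $\kappa(x)$ varies on the same scale as the Riccati relaxation time, $U^+$ lags behind $\sqrt{-R}$ and the pointwise ratio deteriorates. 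The reparametrization $d\tau=\kappa\,dt$, $V=U^+/\kappa$ does not save this: the resulting error term $-V\,d(\log\kappa)/d\tau$ is not small under relative pinching alone (nothing bounds $\dot\kappa/\kappa^2$), and the GHK example shows concretely that it can push the condition number of $V^+$ up to order $1/a^2$. So the ``real technical heart'' you identify is an obstruction, not a step to be filled in.

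The paper's route is genuinely different and avoids pointwise ratios altogether. It works with \emph{time-averaged} eigenvalues over periodic orbits: for a $T$-periodic forcing $f>0$, denoting by $\Lambda(a)=\int_0^T\lambda_a$ the average of the periodic solution to $\dot\lambda+\lambda^2=a^2f$, the paper proves that $a\mapsto\Lambda(a)/a$ is non-increasing, hence $\Lambda(1)\leq\Lambda(a)/a$. This is established via a non-obvious integral inequality (Proposition~\ref{prop:crucial-inequality}) obtained by differentiating in the parameter and reducing to an elementary estimate on step functions. The averaged bound gives the correct $1/a$ bunching on every closed geodesic, and then density of periodic measures (Anosov closing lemma) upgrades this to the uniform statement needed for Hasselblatt's $C^\alpha$ criterion. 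The averaging is essential: it is exactly what survives the GHK counterexample, whereas any pointwise argument cannot.
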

It is a fair question to ask whether it is possible at all for a $a^2$ relatively pinched metric to not be $a^2$ pinched (except in dimension $2$ where the relative pinching is always $1$). We provide an example in any dimension $\geq 3$ in \S \ref{sec:example-pinching}.

In \cite{Gerber-Hasselblatt-Keesing-2003}, Marlies Gerber, Boris Hasselblatt \& Daniel Keesing proved that the technical statement in \cite{Hasselblatt-94-2} was optimal in some sense, so let us explain shortly how this is possible. Using traditional comparison technique, one is lead to study pairs of solutions to the scalar Ricatti equations, in the form
\begin{align*}
\dot{x}_1 + x_1^2 &= k_1\\
\dot{x}_2 + x_2^2 &= k_2,
\end{align*}
where $k_2\geq a^2 k_1$. Assuming that $x_1$ and $x_2$ are defined and positive for $t\geq 0$, Hasselblatt essentially proves that 
\begin{equation}\label{eq:pointwise-ratio-a^2}
\frac{x_1}{x_2}(0)\geq a^2 \Rightarrow \frac{x_1}{x_2} \geq a^2, \forall t\geq 0. 
\end{equation}
If one could replace $a^2$ by $a$ in the RHS, one would obtain our theorem. However Gerber, Hasselblatt and Keesing proved that no such bound can hold in general. We will argue that actually, it is not the ratio \eqref{eq:pointwise-ratio-a^2} that is the crucial object, but
\begin{equation}\label{eq:average-ratio}
\liminf_{T\to+\infty}\frac{\int_0^T x_1}{\int_0^T x_2}
\end{equation}
It is a priori not clear how one would compute such a ratio without relying on the pointwise ratio in general. However, when estimating the Lyapunov exponent of closed orbits, we are led to Ricatti equations with periodic forcing $k_j$, so that we can compute these averages with a finite time computation. Density of closed orbits will then let us conclude, in the spirit of \cite{Hasselblatt-94-3}. It would be interesting to find a more direct argument. 

More precisely, our arguments prove that under $a^2$ relative pinching, for any periodic point $x$,
\[
\limsup_{t\to+\infty} \frac{1}{t}\log \| (d_x\varphi_t)_{|E^u}\| \| (d_x \varphi_t)_{|E^u}^{-1}\|^{\frac{1}{a}} \leq 0. 
\]
We will relatedly obtain that 
\[
\limsup_{t\to+\infty} \frac{1}{t}\log \sup_x \| (d_x\varphi_t)_{|E^u}\| \| (d_x \varphi_t)_{|E^u}^{-1}\|^{\frac{1}{a}} \leq 0. 
\]
Actually, for periodic points, inspecting our proof, we could probably prove for every periodic point $x$ that 
\[
\limsup_{t\to+\infty} \| (d_x\varphi_t)_{|E^u}\| \| (d_x \varphi_t)_{|E^u}^{-1}\|^{\frac{1}{a}} < \infty,
\]
with the $\limsup$ equal to $0$ if along the orbit of $x$, the maximum curvature $\|K(x)\|$ was not constant. If one were able to find a more direct approach and work with all orbits and not only periodic ones, maybe one could prove that for some constant $C>0$,  
\[
\limsup_{t\to +\infty}\sup_x \| (d_x\varphi_t)_{|E^u}\| \| (d_x \varphi_t)_{|E^u}^{-1}\|^{\frac{1}{a}} \leq C.
\]
In this case, the results from \cite{Hasselblatt-97} would apply. The horospherical foliation would be $C^{2a}$ when $a\neq 1/2$, and $C^{\mathcal{O}(x\log x)}$ when $a=1/2$. It would also be interesting then to determine whether the result of Ursula Hamenst\"adt \cite{Hamenstadt-93} could be extended, and prove that in the relative $1/4$ pinching case, one gets an invariant open and dense set of points where the foliation is $C^1$. 

To conclude this introduction, we mention that the regularity of the horospherical foliation is a crucial tool in the proof of many rigidity statements. For this reason, in several theorem of the litterature, it would be possible to replace ``$1/4$'' or ``$1/9$ pinched'' by $1/4$ or $1/9$ \emph{relatively} pinched. As a very partial selection of references we mention \cite{Butt-2023,Gogolev-RodriguezHertz-2024}

\textbf{Acknowledgment.} I am indebted to motivating discussions with Thibault Lefeuvre. I am also grateful to Sebastien Gou\"ezel's (2020!) explanations. Finally, I am supported by PRC grant ADYCT (ANR-20-CE40-0017).

\section{An integral inequation}

The main technical ingredient in our proof is the following inequality, which may (or may not be) of independent interest.
\begin{proposition}\label{prop:crucial-inequality}
Let $\mu>0$ be a smooth $1$-periodic function with average $1$. Then for $h>0$, 
\[
h (1 - e^{-1/h}) \leq \int_0^1 d\tau \mu(\tau)^2 \int_0^1 dt e^{-\frac{1}{h}\int_\tau^{\tau+t} \mu}. 
\]
\end{proposition}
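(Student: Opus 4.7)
My plan is to simplify the right-hand side by a change of variable that eliminates the transcendental dependence on $\mu$, reducing the statement to a one-parameter family of pointwise inequalities each of which follows from Jensen's inequality against the natural invariant measure.

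First I would rewrite the inner $t$-integral by substituting $u = \int_\tau^{\tau+t}\mu$. Since $\mu>0$ has average $1$, as $t$ sweeps $[0,1]$ so does $u$, with $du = \mu(\tau+t)\,dt$. Setting $F(\tau) := \int_0^\tau \mu$ and $\phi_u(\tau) := F^{-1}(F(\tau)+u)$, the inner integral becomes $\int_0^1 e^{-u/h}\,du/\mu(\phi_u(\tau))$. Exchanging the order of integration (all integrands are positive, so Tonelli applies), the right-hand side rewrites as
\[
\int_0^1 e^{-u/h}\, J(u)\,du, \qquad J(u) := \int_0^1 \frac{\mu(\tau)^2}{\mu(\phi_u(\tau))}\,d\tau.
\]
Since $h(1-e^{-1/h}) = \int_0^1 e^{-u/h}\,du$, it suffices to prove $J(u) \geq 1$ for every $u \in [0,1]$.

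The crucial observation is that $d\nu := \mu(\tau)\,d\tau$ is a probability measure on $\R/\Z$ that is invariant under each $\phi_u$: indeed, $F$ is a diffeomorphism of $\R/\Z$ conjugating $\phi_u$ to translation by $u$ and conjugating $\nu$ to Lebesgue measure, and translations preserve Lebesgue. Rewriting
\[
J(u) = \int \frac{\mu(\tau)}{\mu(\phi_u(\tau))}\,d\nu(\tau) = \int e^{\log\mu(\tau) - \log\mu(\phi_u(\tau))}\,d\nu,
\]
Jensen's inequality applied to the convex function $\exp$ yields
\[
J(u) \geq \exp\left(\int \bigl[\log\mu(\tau) - \log\mu(\phi_u(\tau))\bigr]\,d\nu\right) = e^0 = 1,
\]
where the exponent vanishes by $\phi_u$-invariance of $\nu$.

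The main conceptual hurdle is spotting the right substitution so that the outer weight $\mu^2$ combines with the Jacobian to leave exactly the ratio $\mu/(\mu\circ\phi_u)$ against the invariant probability measure; once this is done, the argument becomes tautological. As a sanity check, equality is achieved when $\mu\equiv 1$, matching the fact that in that case both sides reduce to $\int_0^1 e^{-t/h}\,dt$.
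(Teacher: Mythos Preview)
Your proof is correct and substantially cleaner than the paper's. The paper proceeds by approximating $\mu$ by step functions, imposes the normalization $C_j\epsilon_j=1/N$ on each piece, computes the double integral explicitly, telescopes, and finally reduces everything to the elementary inequality $\sum_{k=1}^N C_k/C_{k+j}\geq N$ (proved by iterated AM--GM). Your substitution $u=\int_\tau^{\tau+t}\mu$ and the identification of the invariant probability measure $d\nu=\mu\,d\tau$ accomplish in one stroke what the paper's discretization and normalization do by hand: after the change of variable $s=F(\tau)$ your inequality $J(u)\geq 1$ becomes $\int_0^1 \psi(s)/\psi(s+u)\,ds\geq 1$ for a positive $1$-periodic $\psi$, which is exactly the continuous form of the paper's Lemma, and your Jensen argument is the continuous form of its AM--GM proof. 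In effect, the paper's special condition $C_j\epsilon_j=1/N$ is a discrete surrogate for passing to the coordinate in which $\nu$ becomes Lebesgue. Your route avoids the approximation step entirely, works directly for smooth (indeed $L^2$) $\mu$, and makes the equality case $\mu\equiv 1$ transparent.
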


\begin{proof}
Let us start by observing that if $\mu_n$ is a sequence of $L^2$ functions, positive with average $1$, and satisfying the inequality, tending to $\mu$ in $L^2$ norm, then $\mu$ must satisfy the inequality. On the other hand, if the Proposition is true, we can replace ``smooth'' by $L^2$ by a density argument. It thus suffices to prove the inequality for functions that are constant by parts. Additionally, by translation invariance, we can restrict our attention to a more particular class of functions. We take $C_j,\epsilon_j > 0$, $j=1\dots N$ numbers with
\[
\epsilon_1+ \dots + \epsilon_N = C_1 \epsilon_1 + \dots + C_N \epsilon_N = 1, 
\]
and require that additionally, for $j=1\dots N$,
\begin{equation}\label{eq:well-distributed}
C_j \epsilon_j = \frac{1}{N}.
\end{equation}
Now we consider the function $\mu$ that is constant by parts, with the successive values $C_1, \dots, C_N$, on intervals of length $\epsilon_1, \dots, \epsilon_N$. Let us denote by $I_j$ these successive intervals.

In this case, we can actually compute everything, and prove the inequality directly. By periodicity, we observe that once we have computed the contribution to the integral from values of $\tau\in I_1$, we will be able to deduce the contributions from the other intervals $I_j$, $j=2\dots N$.

For $\tau\in I_1$, and $j = 2 \dots N$,
\begin{align*}
\int_0^{\epsilon_1-\tau} dt e^{- \frac{1}{h} \int_\tau^{t+\tau}\mu} &= \frac{h}{C_1}(1- e^{\frac{C_1 (\tau-\epsilon_1)}{h} }), \\
\int_{I_j} dt 				e^{- \frac{1}{h} \int_\tau^{t+\tau}\mu} &= \frac{h}{C_j} e^{\frac{C_1 \tau}{h} - \frac{C_1\epsilon_1+\dots + C_{j-1}\epsilon_{j-1}}{h}}(1 - e^{- \frac{C_j\epsilon_j}{h}}),\\
\int_{1-\tau}^1 			e^{- \frac{1}{h} \int_\tau^{t+\tau}\mu} &= \frac{h}{C_1} e^{\frac{C_1 \tau}{h} - \frac{1}{h}}(1 - e^{-\frac{C_1\tau}{h}}).
\end{align*}
The contribution from $I_1$ to the RHS of the inequality is thus
\[
\begin{split}
h C_1 \epsilon_1 (1- e^{-1/h}) + &h^2 C_1 ( e^{\frac{C_1 \epsilon_1}{h}}- 1) \Big[ - \frac{1}{C_1}e^{-\frac{C_1\epsilon_1}{h}} + \frac{1}{C_1} e^{- \frac{1}{h}} \\
&\quad + \sum_{j=2}^N \frac{1}{C_j} (1- e^{-\frac{C_j\epsilon_j}{h}}) e^{- \frac{ C_1 \epsilon_1 + \dots + C_{j-1} \epsilon_{j-1}}{h}} \Big]
\end{split}
\]
We see that when we sum the contributions from all the intervals, the first term will sum to the left hand side of the inequality, so we only need to prove that the sum of the contribution from the $h^2$ term is nonnegative. Let us now use \eqref{eq:well-distributed}: the second part of the contribution above is 
\[
h^2 C_1( e^{\frac{1}{Nh}}- 1) \Big[ - \frac{1}{C_1}e^{-\frac{1}{Nh}} + \frac{1}{C_1} e^{- \frac{1}{h}}  + \sum_{j=2}^N \frac{1}{C_j} (1- e^{-\frac{1}{Nh}}) e^{- \frac{ j-1 }{N h}} \Big]
\]
Now, we can remove the $h^2 (e^{1/Nh}-1)$ term that is not essential here, and sum over the intervals $I_j$. Finally, it suffices to prove that
\[
0 \leq \sum_{k=1}^N \left( - e^{-\frac{1}{Nh}} + e^{-1/h} + \sum_{j=1}^{N-1} \frac{C_k}{C_{k+j}}(1-e^{-1/Nh})e^{- \frac{j}{Nh}}\right). 
\]
(here we extended the notation $C_j$ periodically for all $j\in\Z$). We introduce a telescopic sum:
\[
 e^{-1/h} - e^{-\frac{1}{Nh}} = - (1 - e^{-\frac{1}{Nh}})( e^{-\frac{1}{Nh}} + \dots + e^{-\frac{N-1}{Nh}}).
\]
The inequality becomes
\[
0 \leq (1- e^{-\frac{1}{Nh}}) \sum_{j=1}^{N-1} e^{-\frac{j}{Nh}}\sum_{k=1}^N \left( \frac{C_k}{C_{k+j}} - 1 \right). 
\]
We need a lemma
\begin{lemma}
for any numbers $a_j>0$, $j=1\dots M$, denoting $a_{M+1}=a_1$,
\[
\sum_{j=1}^M \frac{a_j}{a_{j+1}} \geq M,
\]
with equality if and only if all $a_j$'s are equal. 
\end{lemma}

\begin{proof}
We start by observing that for $a,b,c>0$,
\[
\frac{a}{b} + \frac{b}{c} \geq 2\sqrt{\frac{a}{c}} ,
\] 
with equality if and only if $b= \sqrt{ac}$. Next, we also have
\[
2\sqrt{\frac{a}{b}} + \frac{b}{c} \geq 3 \sqrt[3]{\frac{a}{c}},
\]
with equality if and only if $b = (c \sqrt{a})^{2/3}$. More generally,
\[
n \sqrt[n]{\frac{a}{b}} + \frac{b}{c} \geq (n+1) \sqrt[n+1]{\frac{a}{c}},
\]
with equality if and only if $b= (c a^{\frac{1}{n}})^{\frac{n}{n+1}}$. Proceeding by finite induction, we deduce that 
\[
\sum_{j=1}^M \frac{a_j}{a_{j+1}} \geq = M \sqrt[M]{\frac{a_1}{a_{M+1}}}= M. 
\]
Without loss of generality, we may assume that the $a_j$'s are ordered, $a_1\leq a_2 \leq \dots \leq a_M$. According to equality cases above, there can only be equality if (at least), 
\[
a_M = (a_{M+1} a_1^{\frac{1}{M}})^{\frac{M}{M+1}} = a_1. 
\]
On the other hand, if all $a_j$'s are equal, there certainly is equality. 
\end{proof}

Let us consider the sum ($j\geq 1$),
\[
\sum_{k=1}^N \frac{C_k}{C_{k+j}}-1.
\]
We can decompose the sum according to the orbits of $k \mapsto k+j$ in $\Z/N\Z$, and apply our lemma above to deduce that this sum is non-negative. If not all $C_k$'s are equal, then it must be positive at least when $j=1$. This completes the proof of the Proposition.
\end{proof}

Let us consider the quotient of the RHS by the LHS of the inequality in the Proposition. It is given by
\[
1 + h \frac{(1-e^{-\frac{1}{Nh}})^2}{1 - e^{-\frac{1}{h}}} \sum_{j=1}^{N-1} e^{-\frac{j-1}{Nh}}\sum_{k=1}^N \frac{C_k}{C_{k+j}} - 1 . 
\]
When $h$ is very small this is
\[
1 + h \sum_{k=1}^N \frac{C_k}{C_{k+1}} - 1 + \mathcal{O}(e^{-\frac{1}{Nh}}),
\]
so that the inequality becomes an equality in the limit $h\to0$. On the other hand the limit when $h\to + \infty$ is
\[
1 + \frac{1}{N^2}\sum_{j=1}^{N-1} \sum_{k=1}^N \frac{C_k}{C_{k+j}} - 1 ,
\]
which we can check to be 
\[
\int_0^1 \mu^2. 
\]
(and the inequality becomes asymptotically an instance of the Cauchy Schwarz inequality).

\section{Periodic Ricatti equation and strength of forcing}

We will now interpret the result of the previous section in terms of Ricatti equations. Let us take $f>0$ a smooth $1$-periodic function, and denote by $\lambda_a$ the unique smooth positive $1$-periodic solution to 
\[
\dot{\lambda} + \lambda^2 = a^2 f. 
\]
That such a solution is well defined follows from the usual monotonicity properties of the Ricatti equation. Additionally, it depends smoothly on the parameter $a$. Let us set ($a>0$)
\[
\Lambda(a)= \int_0^1 \lambda_a. 
\]
Again, from usual arguments, we know that the thus-defined function $\Lambda$ is increasing with $a$. We also observe that
\[
\int_0^1 \lambda_a^2 = a^2 \int_0^1 f,
\]
so that 
\begin{equation}\label{eq:upper-bound-from-Cauchy-Schwarz}
\Lambda(a) \leq a \sqrt{\int_0^1 f}. 
\end{equation}
We will prove
\begin{proposition}\label{prop:almost-convexity}
Under the standing assumptions, 
\[
a \mapsto \frac{\Lambda(a)}{a}
\]
is non-increasing. 
\end{proposition}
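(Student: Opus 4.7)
The plan is to derive an explicit formula for $\Lambda'(a)$ in a form that matches Proposition~\ref{prop:crucial-inequality} applied to $\mu = \lambda_a/\Lambda(a)$ (positive, of average $1$) with parameter $h = 1/(2\Lambda(a))$. Indeed, the monotonicity of $\Lambda(a)/a$ is equivalent to $a\Lambda'(a) \leq \Lambda(a)$, and I expect this to rearrange precisely to the inequality from the previous section.

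The first step is to differentiate the Ricatti equation in the parameter $a$. The function $\psi_a := \partial_a \lambda_a$ is the unique smooth $1$-periodic solution of the linear equation $\dot\psi + 2\lambda_a\psi = 2af$. Using the integrating factor $E(t) = \exp(2\int_0^t\lambda_a)$, which grows exponentially because $\lambda_a>0$, I would obtain the convolution representation
\[
\psi_a(t) = 2a\int_{-\infty}^t f(\tau)\, e^{-2\int_\tau^t\lambda_a}\,d\tau.
\]
Integrating over one period, swapping the order of integration, and summing the resulting geometric series via the identity $\int_\tau^{\tau+k+t}\lambda_a = k\Lambda(a) + \int_\tau^{\tau+t}\lambda_a$ together with the $1$-periodicity of $f$ should yield
\[
\Lambda'(a) = \frac{2a}{1-e^{-2\Lambda(a)}}\int_0^1 d\tau\, f(\tau)\int_0^1 e^{-2\int_\tau^{\tau+t}\lambda_a}\,dt.
\]

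Next, I would substitute $a^2 f = \dot\lambda_a + \lambda_a^2$ coming from the Ricatti equation itself. For the piece involving $\dot\lambda_a$, integration by parts in $\tau$ has vanishing boundary terms (by $1$-periodicity of $\lambda_a$ and the invariance $\int_1^{1+t}\lambda_a = \int_0^t\lambda_a$), and the remaining expression contains the quantity $\int_0^1 \lambda_a(\tau+t)e^{-2\int_\tau^{\tau+t}\lambda_a}\,dt$, which is a total $t$-derivative and evaluates to $(1-e^{-2\Lambda(a)})/2$. Collecting terms yields the clean identity
\[
a\Lambda'(a) = 2\Lambda(a) - \frac{2}{1-e^{-2\Lambda(a)}}\int_0^1 d\tau\,\lambda_a(\tau)^2\int_0^1 e^{-2\int_\tau^{\tau+t}\lambda_a}\,dt.
\]
The inequality $a\Lambda'(a) \leq \Lambda(a)$ then rearranges to
\[
\frac{\Lambda(a)\bigl(1-e^{-2\Lambda(a)}\bigr)}{2} \leq \int_0^1 d\tau\,\lambda_a(\tau)^2\int_0^1 e^{-2\int_\tau^{\tau+t}\lambda_a}\,dt,
\]
which is Proposition~\ref{prop:crucial-inequality} applied to $\mu = \lambda_a/\Lambda(a)$ with $h = 1/(2\Lambda(a))$.

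The main obstacle is the derivation of the clean identity for $a\Lambda'(a)$: the integration by parts and the recognition of the total derivative must be done with care, and one should briefly check absolute convergence of the convolution representing $\psi_a$, which is immediate from $\lambda_a$ being bounded below by a positive constant. Once the identity is in hand, the conclusion follows directly from the preceding section.
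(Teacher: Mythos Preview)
Your proposal is correct and follows essentially the same strategy as the paper: differentiate the periodic Ricatti solution in the parameter, solve the linearized first-order equation, and reduce the resulting sign condition on the derivative to Proposition~\ref{prop:crucial-inequality}. The only differences are organizational---you differentiate directly in $a$ (rather than passing to $h=1/a$ and $\mu_h=h\lambda_{1/h}$), represent the linearized solution by an infinite-time convolution summed as a geometric series (rather than solving on one period with a periodicity-determined constant), and substitute $a^2 f=\dot\lambda_a+\lambda_a^2$ back before the integration by parts---but these lead to the same inequality and the same application of Proposition~\ref{prop:crucial-inequality}.
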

This suggests that the map $\Lambda$ is concave, but we will not determine whether it is the case or not. The rest of this section is devoted to the proof of the Proposition.
\begin{proof}
We start by changing slightly the form of the equation: changing parameter $h=1/a$, and considering
\[
\mu_h = h \lambda_{1/h}, 
\]
we need to prove that 
\[
g : h \mapsto \int_0^1 \mu_h
\]
is non-decreasing. The equation satisfied by $\mu_h$ is given by
\begin{equation}\label{eq:mu}
h \dot{\mu}_h + \mu_h^2 = f. 
\end{equation}
Heuristically, we expect that as $h\to 0$, $\mu_h \sim \sqrt{f}$, so that \eqref{eq:upper-bound-from-Cauchy-Schwarz} becomes the Cauchy-Schwarz inequality. On the other hand, when $h\to+\infty$, $\mu_h$ should become increasingly constant, and \eqref{eq:upper-bound-from-Cauchy-Schwarz} should become saturated. In any case, using the smoothness of the map $g$, let us compute its derivative. 

Certainly, denoting $\nu_h = \partial_h \mu_h$, 
\[
h \dot{\nu} + \dot{\mu} + 2 \mu \nu = 0. 
\]
Let us solve this equation. Starting with
\[
h \frac{d}{dt}(\nu e^{\frac{2}{h}\int_0^t \mu}) = - \dot{\mu}e^{\frac{2}{h}\int_0^t \mu},
\]
we get
\[
h \nu = C e^{- \frac{2}{h}\int_0^t \mu } - \int_0^t d\tau \dot{\mu}(\tau) e^{-\frac{2}{h}\int_{\tau}^t \mu }. 
\]
Here we can integrate by parts:
\begin{align*}
- \int_0^t d\tau \dot{\mu}(\tau) e^{-\frac{2}{h} \int_\tau^t \mu} &= [ - \mu e^{-\frac{2}{h}\int_{\tau}^t \mu} ]_0^t + \frac{2}{h}\int_0^t d\tau \mu(\tau)^2 e^{-\frac{2}{h}\int_\tau^t \mu}, \\
				&= \mu(0) e^{-\frac{2}{h} \int_0^t \mu} - \mu(t)  + \frac{2}{h} \int_0^t d\tau \mu(\tau)^2 e^{-\frac{2}{h}\int_{\tau}^t \mu}. 
\end{align*}
We can thus write 
\[
h\nu(t) = C e^{- \frac{2}{h}\int_0^t \mu } - \mu(t) + \frac{2}{h} \int_0^t d\tau \mu(\tau)^2 e^{-\frac{2}{h}\int_{\tau}^t \mu }. 
\]
The function $\nu$ must be $1$ periodic, so that
\[
C = C e^{- \frac{2}{h}\int_0^1 \mu} + \frac{2}{h} \int_0^1 d\tau {\mu}(\tau)^2 e^{-\frac{2}{h} \int_\tau^1 \mu}. 
\]
We thus get the formula
\[
\begin{split}
\frac{h^2}{2} \int_0^1 \nu = - \frac{h}{2}\int_0^1 \mu &+ \int_0^1 dt \int_0^t d\tau \mu(\tau)^2 e^{- \frac{2}{h}\int_\tau^t \mu} \\
			&+ \frac{\int_0^1 dt e^{-\frac{2}{h} \int_0^t \mu} \int_0^1 d\tau \mu(\tau)^2 e^{-\frac{2}{h}\int_\tau^1 \mu} }{1- e^{-\frac{2}{h}\int_0^1 \mu}}
\end{split}
\]
We will be done if we can prove that $\int \nu \geq 0$, i.e prove that
\[
\frac{h}{2}\int_0^1 \mu \leq \int_0^1 dt \int_0^t d\tau \mu(\tau)^2 e^{- \frac{2}{h}\int_\tau^t \mu} +  \frac{\int_0^1 dt e^{-\frac{2}{h} \int_0^t \mu} \int_0^1 d\tau \mu(\tau)^2 e^{-\frac{2}{h}\int_\tau^1 \mu} }{1- e^{-\frac{2}{h}\int_0^1 \mu}}
\]
We can simplify this a little. First, forgetting that $\mu$ solves \eqref{eq:mu}, we observe that if the inequality holds for all values of $h>0$ for some function $\mu$, then it also holds for all values of $h>0$ for any $\alpha \mu$, $\alpha>0$. We can thus assume that $\int_0^1 \mu =1$. We can also replace $h$ by $2h$ everywhere, and we are led to the inequality
\[
h \leq \int_0^1 dt \int_0^t d\tau \mu(\tau)^2 e^{- \frac{1}{h}\int_\tau^t \mu} +  \frac{\int_0^1 dt e^{-\frac{1}{h} \int_0^t \mu} \int_0^1 d\tau \mu(\tau)^2 e^{-\frac{1}{h}\int_\tau^1 \mu} }{1- e^{-1/h}}
\]
We can rearrange this a little bit further. Multiplying by $(1-e^{-1/h})$, the RHS becomes
\[
\int_0^1 dt \int_0^t d\tau \mu(\tau)^2 e^{- \frac{1}{h}\int_\tau^t \mu}(1-e^{-1/h}) +  \int_0^1 dt e^{-\frac{1}{h} \int_0^t \mu} \int_0^1 d\tau \mu(\tau)^2 e^{-\frac{1}{h}\int_\tau^1 \mu} 
\]
This is
\[
\int_0^1 d\tau \mu(\tau)^2\left[ \int_0^1 dt e^{-\frac{1}{h}(\int_\tau^t \mu + 1)} + \int_\tau^1 dt e^{-\frac{1}{h}\int_\tau^t \mu} - \int_\tau^1 e^{-\frac{1}{h}(\int_\tau^t + 1)} \right]
\]
We recognize
\[
\int_0^1 d\tau \mu(\tau)^2 \int_0^1 dt e^{-\frac{1}{h}\int_\tau^{\tau+t} \mu}. 
\]
Finally, we can apply Proposition \ref{prop:crucial-inequality} from the previous section, and this closes the proof. 
\end{proof}

The main consequence of Proposition \ref{prop:almost-convexity} is that for $0< a <1$,
\begin{equation}\label{eq:good-bunching-ricatti}
\Lambda(a) \leq \Lambda(1) \leq \frac{\Lambda(a)}{a}. 
\end{equation}

\section{Proof of main theorem}

Let us now come back to our main problem. Let us start with the key estimate
\begin{proposition}
Let $\gamma$ be a periodic orbit of the geodesic flow of $(M,g)$ a relatively negatively $a^2$ pinched compact manifold. Let $\lambda_-^u$ (resp. $\lambda_+^u$) be the smallest (resp. largest) unstable lyapunov exponent of $\gamma$. Then
\[
0 < \lambda_-^u \leq \lambda_+^u \leq \frac{\lambda_-^u}{a}. 
\]
\end{proposition}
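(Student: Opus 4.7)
The plan is to relate the Lyapunov exponents $\lambda_\pm^u$ on $\gamma$ to time-averages of the extreme eigenvalues of the unstable Ricatti tensor along $\gamma$, and then to compare those averages by applying Proposition \ref{prop:almost-convexity}.

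First I would exploit that along the periodic orbit $\gamma$ of period $T$, the unstable Ricatti tensor $U(t)$ is smooth, $T$-periodic, symmetric and positive definite, and satisfies $\dot U=-U^2-R$ with $R=R(\cdot,\dot\gamma)\dot\gamma$. Denote by $\mu_-(t)\le\mu_+(t)$ the smallest and largest eigenvalues of $U(t)$, and by $v_\pm(t)$ corresponding unit eigenvectors. Where these eigenvalues are simple, $v_\pm$ depends smoothly on $t$ and differentiating $Uv_\pm=\mu_\pm v_\pm$ together with $\|v_\pm\|=1$ gives $\dot\mu_\pm=\langle\dot U v_\pm,v_\pm\rangle$, so that
\[
\dot\mu_\pm+\mu_\pm^2=k_\pm(t),\qquad k_\pm(t):=-K(v_\pm(t),\dot\gamma(t))>0.
\]
Hence $\mu_\pm$ is the unique positive $T$-periodic solution of a scalar Ricatti equation with positive periodic forcing $k_\pm$; crossings of the spectrum of $U(t)$ occur on a negligible set and can be handled by a density/perturbation argument.

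Next I would apply relative $a^2$-pinching pointwise on $\gamma$ to obtain $k_-(t)\ge a^2 k_+(t)$. By the standard monotonicity of positive periodic solutions of the Ricatti equation in the forcing, $\mu_-$ dominates pointwise the positive $T$-periodic solution $\lambda_a$ of $\dot\lambda+\lambda^2=a^2 k_+$. Rescaling from period $T$ to period $1$ and applying Proposition \ref{prop:almost-convexity} to the forcing $k_+$ then yields
\[
\int_0^T\mu_+\;\le\;\frac{1}{a}\int_0^T\lambda_a\;\le\;\frac{1}{a}\int_0^T\mu_-.
\]

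Finally I would compare these time-averages to the Lyapunov exponents themselves. For any $v_0\in E^u(\gamma(0))$ the vector $v(t)=d\varphi_t v_0$ corresponds to an unstable Jacobi field with $\dot v=Uv$, so $\tfrac{d}{dt}\log\|v\|=\langle U\tilde v,\tilde v\rangle$ lies in $[\mu_-(t),\mu_+(t)]$ by the min--max principle for symmetric operators. Integrating over one period shows that every singular value of the monodromy $d\varphi_T|_{E^u}$ lies in $\bigl[\exp\int_0^T\mu_-,\exp\int_0^T\mu_+\bigr]$, giving $\lambda_-^u T\ge\int_0^T\mu_-$ and $\lambda_+^u T\le\int_0^T\mu_+$. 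Combining with the previous display yields $\lambda_+^u\le\lambda_-^u/a$, and positivity $\lambda_-^u>0$ is immediate from $\mu_->0$.

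The main obstacle I anticipate is the regularity of $v_\pm$ and $k_\pm$ at eigenvalue crossings of $U(t)$: although $\mu_\pm(t)$ is always Lipschitz as a spectral extremum, the eigenvectors and hence the forcings may fail to be continuous there. The cleanest remedy is probably to establish the estimate first under the open-and-dense assumption that $U(\cdot)$ has simple spectrum along $\gamma$, and to extend by approximation, using that both Proposition \ref{prop:crucial-inequality} and the monotonicity of Ricatti solutions are stable under $L^\infty$ convergence of the forcing.
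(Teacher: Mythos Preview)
Your proof is correct and follows essentially the same strategy as the paper: extract scalar Ricatti equations for the extreme eigenvalues of the unstable Ricatti tensor, sandwich their time-averages via Proposition~\ref{prop:almost-convexity}, and relate these averages to the Lyapunov exponents by Gr\"onwall. The only minor variation is that you take $k_+=-K(v_+,\dot\gamma)$ as the reference forcing (so $\mu_+$ is itself the periodic Ricatti solution and one comparison is saved), whereas the paper uses the slightly larger $f(t)=\max_{\|v\|=1}\langle -K v,v\rangle$, which is continuous without reference to the eigenvectors of $U$ and thus sidesteps part of the crossing issue; both choices lead to the same conclusion after the same approximation argument.
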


\begin{proof}
Lyapunov exponents of the periodic orbit are related to the asymptotic growth of unstable Jacobi fields, whose basic description we recall now. For more detail, see \cite{Klingenberg-book}. Denote by $T$ the length of the orbit $\gamma$. Orthogonal Jacobi fields can be described (using some parallel transport) as matrix solutions to
\[
J'' + K J = 0, 
\]
where $K(t)$ is the value of the curvature matrix along $\gamma$, which is symmetric. The unstable solution is the unique solution tending to $0$ in negative time, and is denoted $\mathbb{J}^u$. In particular, 
\[
\lambda_-^u = \liminf_{t\to+\infty} \frac{1}{t} \log \| \mathbb{J}^u(t)^{-1} \|^{-1},
\]
and
\[
\lambda_+^u = \limsup_{t\to+\infty} \frac{1}{t} \log \| \mathbb{J}^u(t)\|
\]
Let us now introduce the unstable Ricatti matrix:
\[
\mathbb{U}(t)=  \mathbb{J}' \mathbb{J}^{-1}. 
\]
This is a symmetric matrix, let us denote by $\lambda_-(t)$ (resp. $\lambda_+(t)$) its smallest (resp. largest) eigenvalue. Then for $v\neq 0$,
\[
\frac{\|\mathbb{J}v\|'}{\|\mathbb{J}v\|} = \frac{\langle\mathbb{J}'v, \mathbb{J}v\rangle}{\|\mathbb{J}v\|^2}\in [\lambda_-(t), \lambda_+(t)].
\]
We deduce by Gr\"onwall's inequality that 
\[
\|\mathbb{J}v\| \in \|v\| [ e^{\int_0^t \lambda_-}, e^{\int_0^t \lambda_+} ]. 
\]
However the unstable Ricatti matrix is $T$-periodic and positive, so that we find
\[
\frac{1}{T}\int_0^T \lambda_-(t) dt \leq \lambda_-^u \leq \lambda_+^u \leq \frac{1}{T} \int_0^T \lambda_+(t)dt. 
\]
The proof will thus be complete if we can prove that 
\[
\int_0^T \lambda_+(t)dt \leq \frac{1}{a} \int_0^T \lambda_-(t)dt.
\]
The unstable Ricatti matrix satisfies
\begin{equation}\label{eq:matrix-Ricatti}
\dot{\mathbb{U}} + \mathbb{U}^2 = - K. 
\end{equation}
Because of eigenvalue crossings, the functions $\lambda_\pm$ may be non-smooth. To avoid as much as possible this kind of complications, we observe that if we approximate the periodic smooth function $-K$ uniformly by trigonometric polynomials, and solve the corresponding Ricatti equation, we will smoothly approximate $\mathbb{U}$. In particular for our purposes, we can assume from now on that $-K$ is a positive, matrix-valued, trigonometric polynomial. In this case, we can further assume that except at a finite number of points, the eigenvalues of $\mathbb{U}$ and $K$ are simple. Let us call $\Omega\subset [0,1]$ the complement of these points. In $\Omega$, we can find smooth $v_\pm(t)$ unit vectors so that 
\[
\mathbb{U}v_\pm = \lambda_\pm v_\pm. 
\]
Additionally, we can assume that the $v_\pm$ have left and right limits at each point of $\Omega^c$. Then
\begin{align*}
\dot{\lambda}_\pm 	&= \frac{d}{dt}\left[ \langle \mathbb{U} v_\pm, v_\pm \rangle \right] \\
					&= \langle (-\mathbb{U}^2 - K) v_\pm, v_\pm \rangle,\\
					&= - \lambda_\pm^2 - \langle K v_\pm, v_\pm \rangle. 
\end{align*}
We see that the $\dot{\lambda}_\pm$ are continuous by parts, with well-defined limits at the points of $\Omega^c$. Let us denote 
\[
f(t) = \max_{\|v\|=1} \langle -K v, v\rangle. 
\]
Again because of eigenvalue crossings, $f$ is continuous, but only smooth on $\Omega$. By assumption, for $\|v\|=1$,
\[
a^2 f(t) \leq \langle K v, v \rangle \leq f(t)
\]
From this we see that on $\Omega$
\begin{align*}
\dot{\lambda}_+ + \lambda_+^2 &\leq f(t),\\
\dot{\lambda}_- + \lambda_-^2 &\leq a^2 f(t).
\end{align*}
Let us denote by ${\eta}_\pm$ the periodic positive solutions to 
\begin{align*}
\dot{\eta}_+ + \eta_+^2 &= f(t),\\
\dot{\eta}_- + \eta_-^2 &= a^2 f(t).
\end{align*}
We must have
\[
\eta_- \leq \lambda_- \leq \lambda_+ \leq \eta_+. 
\]
To see this, we observe that on $\Omega$
\[
\dot{\lambda}_+ - \dot{\eta}_+ \leq \eta_+^2 - \lambda_+^2 = (\eta_+ - \lambda_+)(\eta_+ + \lambda_+). 
\]
This inequality also applies to the left and right limits of $\dot{\lambda}_+$ at the points of $\Omega^c$. We deduce that extremal points of $\lambda_+ - \eta_+$, 
\[
\eta_+^2 \geq \lambda_+^2,
\]
so that globally $\eta_+ \geq \lambda_+$. A similar argument applies to $\eta_-$ and $\lambda_-$. 

Applying again an approximation argument, we can now without loss of generality assume that $f$ is smooth. We can rescale the time to obtain $1$-periodic instead of $T$-periodic functions and then apply Proposition \ref{prop:almost-convexity}, and in particular Inequality \eqref{eq:good-bunching-ricatti} to close the proof
\end{proof}

Let us recall Theorem 5 from \cite{Hasselblatt-94-2}:
\begin{theorem}\label{thm:Hasselblatt-94}
Assume that for some $\alpha\in (0,2)$,
\[
\lim_{t\to +\infty} \sup_{x\in SM} \| (d_x\varphi_{-t})^{-1}_{|E^u}\| \| (d_x\varphi_{-t})_{|E^u}\|^{\frac{2}{\alpha}} = 0.
\]
Then the horospherical foliation is $C^\alpha$. 
\end{theorem}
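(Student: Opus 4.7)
The plan is to realize the horospherical distribution $E^u$ as the unique invariant section of a graph-transform fixed-point equation over $SM$, and to prove $C^\alpha$ regularity by showing that under the stated bunching hypothesis the linearized graph transform is eventually a strict contraction on a Banach space of $C^\alpha$ sections. This is the classical Hadamard--Perron / invariant-section strategy for regularity of dynamically defined foliations.

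First I would fix a smooth splitting $TSM = E^u_0 \oplus F$ with $E^u_0$ a smooth distribution of the same rank as $E^u$, and encode nearby rank-$\dim E^u$ distributions as graphs of sections $P : SM \to \mathrm{Hom}(E^u_0, F)$. Invariance of $E^u$ under $d\varphi_{-t}$ then becomes a fixed-point equation $P = \Gamma_t(P)$, with $\Gamma_t$ built out of the blocks of $d\varphi_t$ in the splitting. By the standard Hadamard--Perron argument $E^u$ is the unique $C^0$ attracting fixed point of $\Gamma_t$, obtained as the uniform limit of $\Gamma_t^n(P_0)$ for any smooth $P_0$.

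Next I would linearize $\Gamma_t$ at this fixed point. Modulo smooth lower-order corrections,
\[
(L_t\, \delta P)(x) = A_t(x)\, \delta P(\varphi_{-t} x)\, B_t(x)^{-1},
\]
where $A_t$ and $B_t$ come from the blocks of $d\varphi_t$ along $F$ and $E^u_0$ respectively. Pointwise $\|A_t B_t^{-1}\|$ is controlled by $\|(d\varphi_{-t})|_{E^u}\|$, while the pullback $\delta P \circ \varphi_{-t}$ costs a power of $\|d\varphi_{-t}\|$, which by the time-reversal symmetry of the geodesic flow is comparable, as a sup over $SM$, to $\|(d\varphi_{-t})^{-1}|_{E^u}\|$.

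The main technical step is to show that the $C^\alpha$ operator norm of $L_t$ satisfies
\[
\|L_t\|_{C^\alpha\to C^\alpha} \leq C\, \|(d\varphi_{-t})^{-1}|_{E^u}\|\, \|(d\varphi_{-t})|_{E^u}\|^{2/\alpha}.
\]
This is obtained via an interpolation between the pointwise conjugation factor and the H\"older pullback factor at a scale adapted to the relative sizes of the two blocks of $d\varphi_t$. The exponent $2/\alpha$, rather than a naive $\alpha$, encodes the balance between the two contributions; note that at the excluded endpoint $\alpha = 2$ this bound degenerates to $\|(d\varphi_{-t})^{-1}|_{E^u}\|\,\|(d\varphi_{-t})|_{E^u}\|$, which is generically bounded away from zero, consistent with the regularity ceiling $C^{2-}$. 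Once this operator-norm bound is in hand, the hypothesis gives $\|L_t\|_{C^\alpha}\to 0$, so $L_t$ is a strict contraction for $t$ large. The invariant-section theorem (equivalently, the Banach fixed-point theorem on the complete metric space of $C^\alpha$ sections near $E^u$) produces a $C^\alpha$ fixed point of $\Gamma_t$ which, by $C^0$ uniqueness, must equal $E^u$; a $C^\alpha$ distribution transverse to a smooth one integrates to a $C^\alpha$ foliation.

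The main obstacle is the H\"older operator-norm estimate for $L_t$ with the precise exponent $2/\alpha$: the interpolation must be carried out on a scale where the pointwise conjugation cost (a factor $\|(d\varphi_{-t})|_{E^u}\|$ from $A_t B_t^{-1}$) and the H\"older pullback cost (a factor $\|(d\varphi_{-t})^{-1}|_{E^u}\|^\alpha$ from $\delta P\circ\varphi_{-t}$) balance through the block structure of $d\varphi_t$. Carrying out this book-keeping so that the resulting contraction hypothesis matches exactly the one in the statement is the delicate heart of the argument; everything else is formal.
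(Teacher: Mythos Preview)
The paper does not prove this statement at all: it is introduced with ``Let us recall Theorem~5 from \cite{Hasselblatt-94-2}'' and used as a black box. So there is no proof in the paper to compare your proposal against.

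That said, your sketch is in the right spirit---Hasselblatt's original argument is indeed an invariant-section/graph-transform argument---but your bookkeeping for the $C^\alpha$ operator norm does not actually produce the exponent $2/\alpha$. From the two ingredients you name, a pointwise conjugation factor $\|A_tB_t^{-1}\|\lesssim \|(d\varphi_{-t})|_{E^u}\|$ and a H\"older pullback cost $\|d\varphi_{-t}\|^\alpha\approx \|(d\varphi_{-t})^{-1}|_{E^u}\|^\alpha$, one gets
\[
\|L_t\|_{C^\alpha}\ \lesssim\ \|(d\varphi_{-t})|_{E^u}\|\,\|(d\varphi_{-t})^{-1}|_{E^u}\|^{\alpha},
\]
whose vanishing is equivalent to $\|(d\varphi_{-t})^{-1}|_{E^u}\|\,\|(d\varphi_{-t})|_{E^u}\|^{1/\alpha}\to 0$, not the hypothesis with $2/\alpha$. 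In Lyapunov-exponent shorthand this is $\alpha<\lambda_-/\lambda_+$, which is the generic Anosov bunching threshold and is strictly worse (by a factor $2$) than the statement you are trying to prove. You acknowledge that the $2/\alpha$ is ``not naive'' but then simply assert it; invoking time-reversal symmetry only identifies $\sup_x\|d\varphi_{-t}\|$ with an $E^u$ quantity, it does not double anything.

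The missing idea is that for geodesic flows the unstable distribution is the graph of the unstable Riccati tensor $\mathbb{U}$ (the second fundamental form of horospheres), a symmetric $2$-tensor on the orthogonal complement of the velocity. Propagating $\mathbb{U}$ along the flow conjugates it on \emph{both} sides by the Jacobi matrix $\mathbb{J}^u$, so the pointwise contraction in the graph transform is of order $\|(\mathbb{J}^u)^{-1}\|^2$ rather than $\|(\mathbb{J}^u)^{-1}\|$. This is what turns $1/\alpha$ into $2/\alpha$, and it is genuinely specific to geodesic flows; it does not come from a generic interpolation between the pointwise and H\"older parts of the norm. Without this mechanism your argument proves only the weaker $C^{a-}$ regularity, not $C^{2a-}$.
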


As we have seen above, we can estimate this quantity using Jacobi and Ricatti matrix fields. More precisely, the condition in the theorem is equivalent to requiring the vanishing as $t\to +\infty$ of the supremum over $x\in SM$ of
\[
\| \mathbb{J}^u(t) \| \| (\mathbb{J}^u(t))^{-1} \|^{\frac{2}{\alpha}},
\]
where $\mathbb{J}^u$ is the unstable matrix Jacobi field based at $x$. We have also seen (using some sub-additivity) that if $\mathbb{U}$ is the unstable Ricatti matrix based at $x$, and $\lambda_\pm(x,t)$ its extremal eigenvalues, the quantity above is controlled by 
\[
\exp \left( \int_0^t \lambda_+ - \frac{2\lambda_-}{\alpha} \right). 
\]
In particular, the conclusion of Theorem \ref{thm:Hasselblatt-94} holds if we can prove that 
\[
\lim_{t\to +\infty} \frac{1}{t} \sup_{x\in SM} \int_0^t \lambda_+(x,t) - \frac{2\lambda_-(x,t)}{\alpha} < 0. 
\]
Now, the functions $\lambda_\pm(x,t)$ satisfy 
\[
\lambda_\pm(x,t) = \lambda_\pm(\varphi_t(x),0), 
\]
so we can rewrite this as 
\[
\max\overline{\zeta}:=\lim_{t\to +\infty} \sup_{x\in SM} \frac{1}{t}\int_0^t \zeta(\varphi_t(x)) < 0, 
\]
for
\[
\zeta(x) = \lambda_+(x,0) - \frac{2\lambda_-(x,0)}{\alpha}. 
\]
Let us now use a classical trick (Thank you Sébastien Gou\"ezel !!). Let $x_n$ be a sequence of points for which the supremum at $t=n$ is attained, and consider the probability measure $\mu_n$ along the length $n$ orbit of $x_n$. We can extract to ensure that $\mu_n$ converges weakly to some invariant probability measure $\mu$. Since $\zeta$ is a continuous function, we then have 
\[
\int \zeta d\mu_n \to \int \zeta d\mu = \max\overline{\zeta}. 
\]
Using the decomposition of invariant measures into ergodic component, and the Birkhoff theorem we deduce that 
\[
\max\overline{\zeta} = \sup_{\mu} \int \zeta d\mu,
\]
where the supremum is taken over all invariant probability measures. Finally, it is well known that the Anosov Closing Lemma implies that every invariant probability measure is the weak limit of probability measures supported on periodic orbits. We thus conclude that whenever $(M,g)$ is compact, relatively negatively $a^2$ pinched, and $\alpha < 2a$,
\[
\max\overline{\zeta} < 0. 
\]
Combining this with Theorem \ref{thm:Hasselblatt-94}, we obtain a proof of Theorem \ref{thm:main}. 

\section{Coming back to the example of Gerber, Hasselblatt and Keesing}

As mentionned in the introduction, direct estimation of the ratio between solutions to the Ricatti equations, instead of taking ratio of averages is bound to fail, according to Gerber, Hasselblatt and Keesing. Let explain this phenomenon. 

The example proposed in \cite{Gerber-Hasselblatt-Keesing-2003} is not exactly the same as the one we describe, but the fundamentals are similar. Let us set
\[
f(t) = \begin{cases} 1,& t\in [0,1/2]+\Z \\ \epsilon^2,& t\in]1/2,1[ + \Z \end{cases},
\]
where $0<\epsilon<1$ is small enough. Recalling that $\tanh$ and $\coth$ are the non-constant solutions to the Ricatti equation with constant forcing $1$, we deduce that if $\lambda_a$ is the positive periodic solution to ($a>0$)
\[
\dot{\lambda}_a + \lambda_a^2 = a^2f,
\]
Then
\[
\lambda_a(t)= \begin{cases} a\tanh(a(t -1/2) + t_0) & t\in[0,1/2] \\  a\epsilon \coth( a\epsilon(t-1/2) + t_1)& t\in[1/2,1] \end{cases} 
\]
for some $t_0,t_1$. The continuity condition gives
\begin{align*}
\tanh(t_0) & = \epsilon \coth t_1 \\
\tanh(t_0 - a/2)&= \epsilon \coth( t_1 + a/2). 
\end{align*}
Let us now consider the situation where $\epsilon$ is small, and $a$ becomes very large. Then since $t_0> a/2$, it must also be very large, and we get $\coth t_1 \sim 1/\epsilon$. We also get that $\tanh(t_0- a/2) \sim \epsilon$. This gives
\[
t_1 \sim \epsilon,\quad t_0 - a/2 \sim \epsilon. 
\]
Let us now consider $a_1 < a_2$ both very large, so that $a_2 = a_1/\epsilon$ and take $t= \epsilon/a_1$. Then
\[
\lambda_{a_1}(t) = a_1 \tanh(a_1 t + t_0(a_1) - a_1/2) \sim 2 a_1 \epsilon,
\]
while
\[
\lambda_{a_2}(t) = a_2 \tanh(a_2 t + t_0(a_2) - a_2/2) \sim a_2 = \frac{a_1}{\epsilon}. 
\]
It cometh
\[
\frac{\lambda_{a_1}(t)}{\lambda_{a_2}(t)} \sim 2 \epsilon^2 = 2 \left(\frac{a_1}{a_2}\right)^2.
\]

However, let us now compute averages, in the same scaling limit. Actually, for most of the interval $[0,1/2]$, $\lambda_{a_j}\sim a_j$, and for most of the interval $[1/2,1]$, $\lambda_{a_2} \sim a_2\epsilon$, and we always have $\lambda_{a_1}\geq a_1 \epsilon$. The ratio of averages thus satisfies the expected
\[
\frac{\overline{\lambda_{a_1}}}{ \overline{\lambda_{a_2}}} \gtrsim \frac{ a_1/2 }{a_2 / 2}. 
\]

\section{A metric that is better relatively pinched than pinched}
\label{sec:example-pinching}

\begin{proposition}
In any dimension $n\geq 3$, for any compact $n$-hyperbolic manifold $(M,g)$, there exists a metric $\tilde{g}$ on $M$, $C^3$ close to $g$, such that if $a_1$ is the best local pinching constant, and $a_2$ is the best global pinching constant, 
\[
0< a_1 < 1,\quad \frac{1- a_2}{1- a_1} \geq \frac{3}{2}. 
\]
In particular, $\tilde{g}$ is (strictly) better relatively pinched than pinched.
\end{proposition}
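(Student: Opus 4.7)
The construction is a radially symmetric warped-product perturbation of $g$ supported in a small normal ball. Fix a point $p_0\in M$ and a radius $R_0$ less than the injectivity radius of $(M,g)$; in geodesic polar coordinates $(r,\theta)$ around $p_0$, $g = dr^2 + \sinh^2(r)\,g_{S^{n-1}}$. Take the explicit cutoff $\chi(r) = (1 - r^2/R_0^2)^4$ on $[0, R_0]$ (zero outside), and for small $\epsilon > 0$ define
\[
\tilde g = dr^2 + f(r)^2\,g_{S^{n-1}} \text{ in } B(p_0, R_0), \quad f(r) = \sinh(r) + \epsilon\,r^3\chi(r),
\]
and $\tilde g = g$ outside. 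Since $r^3\chi(r)$ is odd at $r = 0$ and vanishes to third order at $r = R_0$, the resulting $\tilde g$ is $C^3$-smooth on $M$ and $C^3$-close to $g$ for $\epsilon$ small enough.

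By the O'Neill formulas for warped products, at any $(r,\theta)$ with $r > 0$ the sectional curvature of a 2-plane $P$ making angle $\alpha$ with $\partial_r$ reads $\tilde K(P) = \cos^2(\alpha) K_r(r) + \sin^2(\alpha) K_t(r)$ with $K_r = -f''/f$ and $K_t = (1-(f')^2)/f^2$, so the extremes of $\tilde K$ at each point are $K_r$ and $K_t$. To first order in $\epsilon$, $K_r = -1 - \epsilon A(r) + O(\epsilon^2)$ and $K_t = -1 - \epsilon B(r) + O(\epsilon^2)$, where $A = (\eta'' - \eta)/\sinh r$, $B = 2(\cosh r\,\eta'/\sinh^2 r - \eta/\sinh r)$, and $\eta = r^3\chi$. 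The key computation is that with $u = r^2/R_0^2 \in [0,1]$, the leading-order expressions in $R_0$ reduce to the polynomial identities
\[
A(u) = (1-u)^2(6 - 68u + 110u^2), \quad B(u) = (1-u)^3(6 - 22u),
\]
\[
(A-B)(u) = 8u(1-u)^2(11u - 5).
\]
Note $A(0) = B(0) = 6$, so $\tilde g$ is isotropic at $p_0$. Routine optimization yields $\max_u A = 6$ at $u = 0$, $\min_u A \approx -2.33$ at $u \approx 0.257$, and $\max_u|A - B| \approx 2.94$ at $u \approx 0.169$.

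Therefore, for $\epsilon$ small enough, the global maximum of $|\tilde K|$ is $1 + 6\epsilon$ (attained at $p_0$), the global minimum is $1 - 2.33\epsilon$ (attained on the radial plane at $r = R_0\sqrt{0.257}$), and the pointwise spread $|K_r - K_t|$ never exceeds $2.94\epsilon$. Normalizing so that $\max|\tilde K| = 1$ yields $1 - a_2^2 \approx 8.33\,\epsilon$ and $1 - a_1^2 \leq 2.94\,\epsilon$, hence $(1-a_2^2)/(1-a_1^2) \approx 2.83$. Since $a_2 < a_1$ implies $(1+a_1)/(1+a_2) \geq 1$, the same bound $(1-a_2)/(1-a_1) \geq 3/2$ holds after taking square roots. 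Strict positivity of $|A-B|$ on $(0, R_0)$ gives $a_1 < 1$.

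\textbf{Main technical point.} The only substantive computations are the polynomial identities for $A$, $B$, $A-B$ in the variable $u$ (direct substitution of $\chi = (1-u)^4$) and elementary optimization of these polynomials. The $O(R_0^2)$ corrections to these polynomials are harmless since the leading-order ratio $\approx 2.83$ leaves ample room above $3/2$; taking $R_0$ and then $\epsilon$ sufficiently small closes the argument. The construction extends uniformly to all $n\geq 3$, as the warped-product formulas depend only on $f$ and the round sphere metric.
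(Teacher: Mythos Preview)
Your construction is correct and complete, but it differs substantially from the paper's. The paper perturbs \emph{conformally}: it sets $\tilde g = e^{2\varphi}g$ with $\varphi = \epsilon\,\chi((r-r_0)/\epsilon^\alpha)$ supported in a thin annulus of width $\epsilon^\alpha$, $\alpha\in(0,1/3)$. The dominant contribution to $\tilde K(X,Y)+1$ is $-\epsilon^{1-2\alpha}(|X(r)|^2+|Y(r)|^2)\chi''$; since $|X(r)|^2+|Y(r)|^2$ ranges over $[0,1]$ as the plane varies, the \emph{local} spread at each point is $\epsilon^{1-2\alpha}|\chi''|$, while the \emph{global} spread is $\epsilon^{1-2\alpha}(\max\chi''-\min\chi'')$. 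The key qualitative observation is that any compactly supported nonzero $\chi$ forces $\chi''$ to take both signs, and choosing $\chi$ so that $\min\chi''=-\max\chi''$ immediately gives $(1-a_2)/(1-a_1)\to 2$. No explicit optimization is needed.

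Your route instead perturbs the warping function $f$ directly, keeps $R_0$ fixed as a small parameter, and reduces everything to explicit polynomial identities in $u=r^2/R_0^2$. This buys concreteness (the numbers $6$, $-2.33$, $2.94$ and the ratio $\approx 2.83$ are all checkable by hand), at the cost of more computation. Both approaches exploit the same mechanism, namely that radial and tangential planes see different curvatures in a rotationally symmetric perturbation, but the paper's scaling argument isolates this mechanism more cleanly.

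One small slip: you write ``strict positivity of $|A-B|$ on $(0,R_0)$'', but $A-B=8u(1-u)^2(11u-5)$ vanishes at $u=5/11$ as well as at the endpoints. This is harmless, since you only need $|A-B|>0$ \emph{somewhere} to conclude $a_1<1$, which certainly holds.
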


\begin{proof}

We will construct a perturbation localized in a very small ball in hyperbolic space, so that every computation is local. In radial coordinates $(r,\omega)$, recall that the expression for the hyperbolic metric is
\[
g=dr^2 + \sinh(r)^2 g_{\mathbb{S}^{n-1}}(d\omega)
\]
Let us consider a metric of the form $\tilde{g} = e^{2\varphi} g$, with
\[
\varphi = \epsilon \chi\left( \frac{r - r_0 }{\epsilon^\alpha} \right).
\]
We require that $\chi\in C^\infty_c(]-1,1[)$ is not the $0$ function, and $r_0 > \epsilon^\alpha >0$ are small, and $\alpha\in (0,1/3)$. The new metric $\tilde{g}$ coincides with the hyperbolic metric near $r=0$, and for $r> 2 r_0$. It is $C^3$ close to $g$ since $\alpha < 1/3$. Recall that the sectional curvature of the plane generated by orthogonal $X,Y$ for $\tilde{g}$ is given by 
\[
\tilde{K}(X,Y) = e^{-2 \varphi}\left[ - 1  - \nabla^2_{X,X} \varphi - \nabla^2_{Y,Y}\varphi - |\nabla \varphi|^2 + (X \varphi)^2 + (Y \varphi)^2 \right].
\]
(every Riemannian quantity in the RHS computed with respect to the metric $g$). Here, 
\[
- |\nabla \varphi|^2 + (X \varphi)^2 + (Y \varphi)^2 = \mathcal{O}(\epsilon^{2 - 2 \alpha}). 
\]
On the other hand,
\begin{align*}
\nabla^2_{X,X}\varphi 	&= \langle X, \nabla_X \nabla \varphi \rangle \\
						&= \epsilon^{1-\alpha}\langle X, \nabla_X( \chi'((r-r_0)/\epsilon^\alpha) \nabla r) \rangle \\
						&= \epsilon^{1-2\alpha} \langle X, X(r) \chi''((r-r_0)/\epsilon^\alpha) \nabla r \rangle + \mathcal{O}(\epsilon^{1-\alpha})\\	
						&= \epsilon^{1-2\alpha} |X(r)|^2 \chi''\left( \frac{r - r_0}{\epsilon^\alpha} \right) + \mathcal{O}( \epsilon^{1-\alpha} ). 
\end{align*}
Since $\varphi=\mathcal{O}(\epsilon)$, we deduce that 
\[
\tilde{K}(X,Y) =  -1 - \epsilon^{1-2\alpha} (|X(r)|^2 + |Y(r)|^2) \chi''\left( \frac{r - r_0}{\epsilon^\alpha} \right) + \mathcal{O}(\epsilon^{1-\alpha}) 
\]
Since we have $\partial_r r = 1$, and $\partial_\omega r = 0$, $|X(r)|^2 + |Y(r)|^2$ takes all values between $0$ and $1$ when we sample all the planes at a point. In particular, we deduce that $\tilde{g}$ is $a_1^2$ relative pinched with 
\[
a_1 = \min_{r} 1 - \frac{\epsilon^{1-2\alpha}}{2}\left|\chi''\left( \frac{r - r_0}{\epsilon^\alpha} \right)\right| + \mathcal{O}(\epsilon^{1-\alpha}). 
\]
That is
\[
a_1 = 1 - \frac{\epsilon^{1-2\alpha}}{2} \max |\chi''| + \mathcal{O}(\epsilon^{1-\alpha}). 
\]
Let us now consider global pinching. Since $\chi$ does not identically vanish, and is compactly supported, $\chi''$ must crucially take both positive and negative signs. The computation above shows that $\tilde{g}$ cannot be better globally pinched than 
\[
1 - \frac{\epsilon^{1-2\alpha}}{2}( \max \chi'' - \min \chi'' ) + \mathcal{O}(\epsilon^{1-\alpha}). 
\]
We can always arrange so that $\min\chi'' = - \max \chi'' < 0$, so that we find
\[
\frac{1- a_2 }{1- a_1} \geq 2 + \mathcal{O}(\epsilon^{\alpha}). 
\]
Taking $\epsilon$ small enough ensures that the RHS is larger than $3/2$, so that
\[
\frac{1- a_2 }{1- a_1} \geq \frac{3}{2}, 
\]
and
\[
a_2 \leq 1 - \frac{3}{2}(1-a_1) = \frac{3}{2}a_1 - \frac{1}{2} \leq a_1 - \frac{1-a_1}{2} < a_1. 
\]
\end{proof}


\end{document}